\declaretheorem{theorem}
\declaretheorem[sharenumber=theorem]{corollary}
\declaretheorem[sharenumber=theorem]{lemma}
\declaretheorem[sharenumber=theorem]{proposition}
\declaretheorem[sharenumber=theorem,style=remark]{remark}
\declaretheorem[numbered=no,style=remark]{problem}
\declaretheorem[sharenumber=theorem,style=definition]{definition}
\declaretheorem[sharenumber=theorem,style=definition]{example}
\DeclarePairedDelimiterX\ind[2]{\lvert}{\rvert}{#1:#2}
\newcommand{\cent}[2]{\operatorname{C}_{#1}\lparen #2\rparen}
\newcommand{\covdeg}[1]{\operatorname{cd}\lparen #1\rparen}
\newcommand{\cond}[2]{#2_{#1}}
\newcommand{\fieldext}[2]{\mathcal{F}(#1/#2)}
\DeclareMathOperator{\K}{\mathcal{K}}
\title[Ideally \(r\)-constrained subalgebras]{Ideally \(r\)-constrained graded Lie subalgebras \\
of maximal class algebras}
\subjclass{Primary 17B70; secondary 17B65, 17B50}
\keywords{Ideally \(r\)-constrained Lie algebras, Lie algebras of maximal class, just-infinite dimensional Lie algebras, thin algebras, graded Lie algebras}
\author[M.~Avitabile]{Marina~Avitabile} \email{marina.avitabile@unimib.it}
\address{Dipartimento di Matematica e Applicazioni\\
 Università degli Studi di Milano - Bicocca\\
via Cozzi, 55\\
 I-20125 Milano\\
 Italy}
\author[N.~Gavioli]{Norberto~Gavioli} \email{norberto.gavioli@univaq.it}
 \address{Dipartimento di Ingegneria e Scienze dell'Informazione e Matematica\\
 Università degli Studi dell'Aquila\\
 via Vetoio\\
 I-67100 Coppito (AQ)\\
 Italy}
\author[V.~Monti]{Valerio~Monti} \email{valerio.monti@uninsubria.it}
 \address{Dipartimento di Scienza e Alta Tecnologia\\
Università degli Studi dell'Insubria\\
 Via Valleggio, 11\\
I-22100 Como\\
Italy}
\thanks{The first and third author thank the Dipartimento di Ingegneria e Scienze dell'Informazione e Matematica, Università dell'Aquila, for the kind hospitality.}
\thanks{The authors are members of INdAM-GNSAGA}
\begin{document}

\begin{abstract}
Let \(E\supseteq F\) be a field extension and \(M\) a graded Lie 
algebra of maximal class over \(E\). We investigate the \(F\)-subalgebras 
\(L\) of \(M\), generated by elements of  degree~\(1\). We provide conditions 
for \(L\) being either ideally \(r\)-constrained or not just infinite. We 
show by an example that those conditions are tight. Furthermore, we determine 
the structure of \(L\) when the field extension \(E\supseteq F\) is finite. 

A class of ideally \(r\)-constrained  Lie algebras which are not 
\((r-1)\)-constrained is explicitly constructed, for  every \(r\geq 1\).
\end{abstract}

\maketitle

\section{Introduction}

Ideally \(r\)-constrained Lie algebras arise naturally as generalisations of 
thin Lie algebras. The latters are positively graded, infinite dimensional  
Lie algebras, generated in degree \(1\) and such that every non-zero graded 
ideal is trapped between two consecutive Lie powers of the Lie algebra. As an 
immediate consequence of the definition, every homogeneous component of a 
thin algebra has dimension \(1\) or \(2\). 

Thin algebras have been introduced in~\cite{CMNS1996} and are widely studied.  
Although classification results have been provided for several classes of 
these algebras, a special class of thin algebras, that is those algebras 
whose homogeneous components, except the second, have dimension \(2\), is 
less understood. 

In~\cite{GMY2001} is proved that all metabelian thin Lie algebras belong to 
this class and they are in one-to-one correspondence with the quadratic 
extensions of the underlying field \(F\). These results have been extended to 
the non-metabelian case in~\cite{ACG+2023}. The main idea of that paper is to 
consider a quadratic field extension \(E\supseteq F\) and a Lie algebra \(M\) 
of maximal class over \(E\), that is \(M\) is a thin algebra over \(E\) whose 
homogeneous components, except the first, have dimension \(1\) (see 
\cref{sec:pre} for details). Thin algebras where all homogeneous components, 
but the second one, have dimension \(2\), are realised as \(F\)-subalgebras 
of \(M\), generated by two elements of degree \(1\). 

In this paper we focus on ideally \(r\)-constrained algebras, with \(r>0\) an 
integer. According to~\cite{GM2002}, we say that a Lie algebra 
\(L\coloneqq\bigoplus_{i\ge 1}L_{i}\) is  \emph{ideally \(r\)-constrained} 
(or $r$-constrained for short) if, for every non-zero graded ideal \(I\) of 
\(L\), there exists an integer \(i\) such that \(L^{i}\supseteq I\supseteq 
L^{i+r}\), where \(L^k=\bigoplus_{j\ge k} L_j\) is the \(k\)-th Lie power of 
\(L\). When \(r=1\) we will simply say that \(L\) is \emph{ideally 
constrained}. Thus, an ideally constrained Lie algebra whose first 
homogeneous component has dimension \(2\) is thin. Note also that a finitely 
generated \(r\)-constrained Lie algebra is {\em just infinite}, that is every 
non-zero graded ideal of the algebra has finite codimension. Periodic just 
infinite Lie algebras have been studied in \cite{GMS2004}, where it is proved 
that those algebras are necessarily \(r\)-constrained. 

In the spirit of~\cite{ACG+2023}, we consider an arbitrary  field extension 
\(E\supseteq F\) and a Lie algebra \(M\) of maximal class over \(E\). We 
provide conditions for an \(F\)-subalgebra \(L\) of \(M\), generated in 
degree \(1\), to satisfy a dichotomy, that is to be either ideally 
\(r\)-constrained or not just infinite. Precisely, we associate to \(L\)  a 
sequence of intermediate fields \(F_{i}\)'s, depending on the intersection of 
the first homogeneous component of \(L\) with the \(2\)-{\em step 
centralisers} \(C_{i}\)'s of \(M\) (see \cref{sec:pre} for the definition of 
the \(2\)-step centralisers of a Lie algebra of maximal class). Differently 
from~\cite{ACG+2023},  where only the dimensions of the fields \(F_i\)'s 
matter, in this paper the field \(\K\) generated by the \(F_{i}\)'s plays a 
role in the above mentioned dichotomy. In particular, we can determine the 
structure of \(L\) when the field extension \(E\supseteq F\) is finite or 
when \(E\) is algebraic over \(F\) and \(M\) has only finitely many 
\(C_{i}\)'s. 

The paper is structured as follows. In \cref{sec:pre}, after recalling 
standard definitions and properties of Lie algebras of maximal class, we 
introduce the techniques needed to define the fields \(F_{i}\)'s. In 
\cref{sec:main}, we collect the properties of the \(F_{i}\)'s and we prove 
our main result in \cref{thm:dichotomy}. We end the paper with examples and 
open problems in \cref{sec:examples}. 

In particular,  for every \(r\geq1\) we exhibit an 
\(r\)-constrained Lie algebra which is not \((r-1)\)-constrained. 
We also show that the 
assumptions of \cref{thm:dichotomy} are tight, by considering the case \(E\) 
is transcendental over \(F\). 

\section{Preliminaries}\label{sec:pre}
If \(X\) is a subset of a (left) module \(V\) over a ring \(R\), we denote by 
\(RX\) the \(R\)-submodule generated by \(X\). We will simply write \(Rx\) 
when \(X=\{x\}\). By a Lie algebra we mean a graded Lie algebra 
\(L\coloneqq\bigoplus_{i\ge 1}L_{i}\) over some field, generated by its first 
homogeneous component \(L_{1}\). If not otherwise stated, we shall assume 
that \(L_i\ne 0\) for every \(i\): in particular \(L\) is infinite 
dimensional. Given a Lie algebra \(L\) and subsets \(X\) and \(Y\) of \(L\), 
we denote by \([X,Y]\) the additive subgroup of \(L\) generated by the 
elements \([x,y]\) as \(x\) ranges in \(X\) and \(y\) ranges in \(Y\). We 
will simply write \([X,y]\) when \(Y=\{y\}\).  In particular, 
\(L_{i}=[L_{i-1},L_{1}]\) for every \(i\ge 2\), as we are assuming that \(L\) 
is generated by \(L_1\). A Lie algebra \(M\coloneqq\bigoplus_{i\ge 1}M_{i}\) 
over a field \(E\) is said to be of \emph{maximal class} if 
\(\dim_{E}M_{1}=2\) and \(\dim_{E}M_{i}=1\) for every \(i\ge2\). The 
\emph{\(2\)-step centraliser} \(C_{i}\) for \(i\ge 2\) is 
\(\cent{M_{1}}{M_{i}}\): this is a subspace of dimension \(1\). Note that for 
every non-zero \(l_{i}\in M_{i}\) we have \(C_{i}=\cent{M_{1}}{l_{i}}\). The 
following lemma, which we quote for easy reference, is implicitly stated in 
\cite[Proposition~4.1]{CN2000}. 
\begin{lemma}\label{lem:specialisation}
  Let \(M\) be a Lie algebra of maximal class and \(C\) be a \(2\)-step 
  centraliser. If \(t\) is the smallest integer such that \(C_{t}=C\), then 
  in every interval of integers of length \(t\) there is at least one \(j\) 
  such that \(C_{j}=C\). In particular, there are infinitely many occurrences 
  of \(C\). 
\end{lemma}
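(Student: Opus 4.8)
The plan is to fix a nonzero vector \(e\in C\), so that \(C\) is a line in \(M_{1}\), and to choose \(f\in M_{1}\setminus C\); thus \(\{e,f\}\) is a basis of \(M_{1}\). For a nonzero \(l_{j}\in M_{j}\) we have \(C_{j}=\cent{M_{1}}{l_{j}}\), so the condition \(C_{j}=C\) reads \(\cent{M_{1}}{l_{j}}=Ee\), equivalently \([l_{j},e]=0\); I will call such an index \(j\) an \emph{occurrence} of \(C\). Since \(M_{2}=[M_{1},M_{1}]\ne 0\) is spanned by \(z\coloneqq[f,e]\), the whole claim reduces to two facts about occurrences: the smallest occurrence is exactly \(t\) (this is the hypothesis), and the gap between two consecutive occurrences is at most \(t\). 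Once these are established, a short interval‑chasing argument shows that every window of \(t\) consecutive integers (in the range \(\ge 2\)) meets the occurrence set, and in particular that there are infinitely many occurrences.

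The core is therefore the gap bound, and I would prove it by showing that the \emph{reappearance} of \(C\) after an arbitrary occurrence mimics its \emph{first} appearance. To this end I introduce the auxiliary elements \(u_{k}\coloneqq[f,\underbrace{e,\dotsc,e}_{k}]\in M_{k+1}\), so that \(u_{0}=f\), \(u_{1}=z\), and \(u_{k}=[u_{k-1},e]\). Reading \(u_{k}=[u_{k-1},e]\) degree by degree and using \(\dim_{E}M_{k}=1\), one checks that the least \(k\ge 1\) with \(u_{k}=0\) is precisely the first occurrence of \(C\); minimality of \(t\) then gives \(u_{1},\dotsc,u_{t-1}\ne 0\) and \(u_{t}=0\). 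The key computation is the Jacobi identity \([l_{j},f,\underbrace{e,\dotsc,e}_{k}]=[l_{j},u_{k}]\), valid whenever \(j\) is an occurrence, i.e.\ whenever \([l_{j},e]=0\): an easy induction on \(k\) moves all the trailing \(e\)'s inside the bracket with \(f\), the cross terms vanishing because \([l_{j},e]=0\) while \([u_{k},e]=u_{k+1}\).

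Granting this, if \(j\) is an occurrence then the next occurrence sits at \(j+s\), where \(s\ge 1\) is the least index with \([l_{j},u_{s}]=0\); since \(u_{t}=0\) forces \([l_{j},u_{t}]=0\), we get \(s\le t\), which is exactly the gap bound. The step that needs care — and which I expect to be the main obstacle — is justifying that \([l_{j},u_{k-1}]\) genuinely spans \(M_{j+k}\) for \(1\le k\le s\) and that its non‑vanishing for \(k<s\) detects the non‑occurrences strictly between \(j\) and \(j+s\). I would settle this by an induction whose base case uses \(M_{j+1}=[M_{j},f]\) (as \([M_{j},e]=0\)) and whose inductive step uses that \([l_{j},u_{k}]=[[l_{j},u_{k-1}],e]\) spans the one‑dimensional \(M_{j+k+1}\) exactly when it is nonzero, so the bracket chain neither stalls prematurely nor skips an occurrence. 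Combining the first‑occurrence hypothesis with the gap bound then yields the interval statement and the infinitude of occurrences.
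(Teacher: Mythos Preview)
The paper does not supply its own proof of this lemma; it is quoted without proof and attributed to \cite[Proposition~4.1]{CN2000}. Your argument is correct and is in fact the standard one behind that reference: the identity \([l_{j},f,\underbrace{e,\dots,e}_{k}]=[l_{j},u_{k}]\) for an occurrence \(j\), obtained from the Jacobi identity together with \([l_{j},e]=0\), combined with \(u_{t}=0\), forces the gap between consecutive occurrences to be at most \(t\), and the interval statement then follows by the routine chase you describe. The step you flag as delicate---that \([l_{j},u_{k-1}]\) genuinely spans \(M_{j+k}\) until it first vanishes, so that its vanishing detects exactly the next occurrence---is settled precisely by the induction you outline, since each \(M_{j+k}\) is one-dimensional and \(v_{k}=[v_{k-1},e]\) vanishes, for \(v_{k-1}\ne0\), if and only if \(C_{j+k}=C\).
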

We say that a Lie algebra \(L\coloneqq\bigoplus_{i\ge 1}L_{i}\) is 
\emph{ideally \(r\)-constrained} if, for every non-zero graded ideal \(I\), 
there exists an integer \(i\) such that \(L^{i}\supseteq I\supseteq 
L^{i+r}\), where \(L^k=\bigoplus_{j\ge k} L_j\) is the \(k\)-th Lie power of 
\(L\). Equivalently, \(L\) is ideally \(r\)-constrained if, for every 
positive integer \(i\) and every non-zero homogeneous element \(z\) of degree 
\(i\), we have that \([z,\prescript{}{r}L_{1}]=L_{i+r}\), where 
\([z,\prescript{}{k}L_{1}]\) is defined recursively by setting 
\([z,\prescript{}{1}L_{1}]\coloneqq[z,L_{1}]\) and 
\([z,\prescript{}{k}L_{1}]\coloneqq[[z,\prescript{}{k-1}L_{1}].L_{1}]\) for 
\(k\ge2\). When \(r=1\) we will simply say that \(L\) is \emph{ideally 
constrained}. A Lie algebra of maximal class is ideally constrained. A 
\emph{just infinite} (dimensional) Lie algebra is an algebra whose non-zero 
graded ideals have finite codimension. A finitely generated ideally 
\(r\)-constrained Lie algebra is just infinite. 

\begin{definition}\label{def:E_U}
  Let \(V\) be a vector space over a field \(E\) and \(U\) a subgroup of 
  the additive group of \(V\). We denote by \(\cond{U}{E}\) the subset 
  of \(E\) of the elements \(\alpha\) such that \(\alpha U\subseteq U\).
\end{definition}

\begin{lemma}\label{lem:modulering}
  With the previous notation, the subset 
  \(\cond{U}{E}\) of \(E\) is a subring of \(E\). The subset \(U\) is an 
  \(R\)-module with respect to a given subring \(R\) of \(E\) if and only if 
  \(R\subseteq \cond{U}{E}\): in particular, \(U\) is an 
  \(\cond{U}{E}\)-module. If there exists a subfield \(F\) of \(\cond{U}{E}\) 
  such that \(\dim_{F}U\) is finite, then \(\cond{U}{E}\) is a subfield of 
  \(E\). 
\end{lemma}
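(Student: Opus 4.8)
The plan is to verify the three assertions in turn, disposing of the first two by routine closure checks and concentrating the actual work on the third.

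First I would show that \(\cond{U}{E}\) is a subring of \(E\) by checking closure under the ring operations directly from \cref{def:E_U}. It contains \(0\) (since \(0\cdot U=\{0\}\subseteq U\)) and \(1\) (since \(1\cdot u=u\in U\)); and for \(\alpha,\beta\in\cond{U}{E}\) and \(u\in U\) one has \((\alpha+\beta)u=\alpha u+\beta u\in U\), \((-\alpha)u=-(\alpha u)\in U\), and \((\alpha\beta)u=\alpha(\beta u)\in U\), using that \(U\) is an additive subgroup together with the defining property of \(\cond{U}{E}\). This yields closure under addition, negation, and multiplication.

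For the second assertion, any subring \(R\) of \(E\) acts on the \(E\)-vector space \(V\) by restricting the \(E\)-action, and the module axioms, valid on \(V\), automatically restrict to the additive subgroup \(U\); thus the only obstruction to \(U\) being an \(R\)-submodule is closure, i.e.\ \(\alpha u\in U\) for all \(\alpha\in R\) and \(u\in U\). By definition this closure holds precisely when \(R\subseteq\cond{U}{E}\), which gives the equivalence. Taking \(R=\cond{U}{E}\), which is a ring by the first part and is trivially contained in itself, yields the ``in particular'' clause.

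The heart of the lemma is the third assertion, and here I would exploit finite-dimensionality. Assume \(F\subseteq\cond{U}{E}\) with \(\dim_{F}U\) finite, and fix a non-zero \(\alpha\in\cond{U}{E}\); the task is to produce \(\alpha^{-1}\) inside \(\cond{U}{E}\). Since \(F\subseteq\cond{U}{E}\), the second assertion makes \(U\) a finite-dimensional \(F\)-vector space, and the multiplication map \(m_{\alpha}\colon U\to U\), \(u\mapsto\alpha u\), is \(F\)-linear because \(E\) is commutative. As \(E\) is a field and \(\alpha\neq0\), the relation \(\alpha u=0\) forces \(u=0\), so \(m_{\alpha}\) is injective; an injective endomorphism of a finite-dimensional vector space is surjective. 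Hence every \(u\in U\) equals \(\alpha v\) for some \(v\in U\), i.e.\ \(\alpha^{-1}u=v\in U\) with the inverse taken in the field \(E\), so \(\alpha^{-1}U\subseteq U\) and \(\alpha^{-1}\in\cond{U}{E}\). The one step requiring care is this passage from injectivity to surjectivity, which is exactly where the hypothesis \(\dim_{F}U<\infty\) is indispensable; the degenerate case \(U=0\) is covered automatically, since then \(m_{\alpha}\) is vacuously bijective and \(\cond{U}{E}=E\).
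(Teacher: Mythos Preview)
Your proof is correct. The first two parts match the paper's treatment (which simply declares them trivial), and your third part reaches the same conclusion by a slightly different path: the paper fixes a non-zero \(u\in U\) and uses the injective \(F\)-linear map \(\cond{U}{E}\to U\), \(k\mapsto ku\), to deduce that \(\dim_{F}\cond{U}{E}\) is finite, whence the integral domain \(\cond{U}{E}\) is a field; you instead fix a non-zero \(\alpha\in\cond{U}{E}\) and apply the injectivity-implies-surjectivity argument directly to \(m_{\alpha}\colon U\to U\), obtaining \(\alpha^{-1}\in\cond{U}{E}\) without the intermediate dimension bound. Both arguments hinge on the same rank--nullity principle, so the difference is cosmetic; your version is marginally more self-contained since it avoids invoking the standard fact that a finite-dimensional domain over a field is a field.
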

\begin{proof}
  We prove the last statement since the rest is trivial. If \(U=0\) then 
  \(\cond{U}{E}=E\); if \(U\ne0\), take \(u\in U\setminus \{0\}\) and note 
  that the map from \(\cond{U}{E}\) to \(U\) sending \(k\) in \(ku\) is an 
  injective \(F\)-linear map so \(\dim_{F}\cond{U}{E}\) is finite, and 
  therefore \(E_U\) is a field. 
\end{proof}

\begin{remark}\label{rem:increasingring}
  Let \(L\) be a Lie algebra over some field \(E\). If \(U\) is an additive 
  subgroup of \(L\) and \(X\) is a subset of \(L\), then 
  \(\cond{[U,X]}{E}\supseteq \cond{U}{E}\). 
\end{remark}

\begin{lemma}\label{lem:cosetgeneratesamefield}
  Let \(X\) and \(Y\) be subsets of a field \(E\) such that \(X=Yu\) for some 
  non-zero \(u\) in \(E\) and \(1\in X\cap Y\). Then the subfield generated by 
  \(X\) and the subfield generated by \(Y\) coincide. 
\end{lemma}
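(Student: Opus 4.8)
The plan is to denote by \(K_X\) and \(K_Y\) the subfields of \(E\) generated by \(X\) and by \(Y\) respectively, and to prove the two inclusions \(K_X\subseteq K_Y\) and \(K_Y\subseteq K_X\) separately. The whole argument rests on extracting from the normalisation hypothesis \(1\in X\cap Y\) the fact that both \(u\) and \(u^{-1}\) lie in each of the two fields.

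First I would record these consequences of \(1\in X\cap Y\). Since \(1\in Y\), we have \(u=1\cdot u\in Yu=X\), so \(u\in X\subseteq K_X\); and since \(1\in X=Yu\), there is \(y_{0}\in Y\) with \(y_{0}u=1\), that is \(u^{-1}=y_{0}\in Y\subseteq K_Y\). As \(K_X\) and \(K_Y\) are subfields of \(E\) (in particular closed under inverses of non-zero elements), it follows that \(u,u^{-1}\in K_X\) and \(u,u^{-1}\in K_Y\).

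For the inclusion \(K_X\subseteq K_Y\) it suffices to show \(X\subseteq K_Y\), since \(K_X\) is by definition the smallest subfield containing \(X\). Any \(x\in X\) can be written as \(x=yu\) with \(y\in Y\subseteq K_Y\), and \(u\in K_Y\) from the previous step, so \(x\in K_Y\). For the reverse inclusion \(K_Y\subseteq K_X\) I would likewise show \(Y\subseteq K_X\): given \(y\in Y\), we have \(yu\in Yu=X\subseteq K_X\), and since \(u^{-1}\in K_X\) we conclude \(y=(yu)u^{-1}\in K_X\). The two inclusions together yield \(K_X=K_Y\).

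There is no genuine obstacle here: the only point requiring attention is that the hypothesis \(1\in X\cap Y\) is exactly what forces \(u\) into \(X\) and \(u^{-1}\) into \(Y\), after which multiplication by \(u\) (respectively by \(u^{-1}\)) transports one generating set into the other field. Without the condition \(1\in X\cap Y\) the conclusion can fail, so this hypothesis is used in an essential way.
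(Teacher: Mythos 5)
Your proof is correct and follows essentially the same route as the paper: both extract \(u\in X\) from \(1\in Y\), use \(y=(yu)u^{-1}\) to get \(Y\) inside the field generated by \(X\), and handle the reverse inclusion via \(Y=Xu^{-1}\) (which you make explicit by noting \(u^{-1}\in Y\)). Your write-up is just a slightly more detailed version of the paper's argument.
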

\begin{proof}
  Since \(1\in Y\), we have that \(u=1u\in X\). Thus, given \(y\) in \(Y\), 
  we get that \(y=yu\cdot u^{-1}\) belongs to the subfield generated by 
  \(X\). For the other inclusion, just note that \(Y=Xu^{-1}\). 
\end{proof}

\begin{definition}\label{def:linearalgebra}
 Let \(E\supseteq F\) be fields, \(V\) an \(E\)-vector space, \(C\) an 
  \(E\)-subspace of \(V\) of codimension \(1\), and \(W\) an \(F\)-subspace 
  of \(V\), such that \(W\nsubseteq C\).  Let \(\phi\colon V\to E\) be an 
  \(E\)-linear map such that \(\ker\phi=C\) and \(1\in\phi(W)\) (such a map 
  clearly exists). We denote  by \(\fieldext{W}{C}\) the subfield  of \(E\)  
  generated by \(\phi(W)\). 
\end{definition}
The notation just introduced is unambiguous since the following result holds. 
\begin{lemma}\label{lem:linearalgebra}
  The subfield  \(\fieldext{W}{C}\) contains \(F\) and does not depend on 
  the choice of \(\phi\). Furthermore, \(\fieldext{W}{C}=F\) if and only if 
  \(\dim_{F}(W/W\cap C)=1\). Finally, 
  \(\ind{\fieldext{W}{C}}{F}\ge\dim_{F}(W/W\cap C)\), and, under the 
  additional assumption that \(\dim_{F}(W/W\cap C)\) is finite,  equality 
  holds 
   if 
  and only if \(\phi(W)\) is a field. 
\end{lemma}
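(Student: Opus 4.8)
The plan is to reduce everything to the behaviour of the restriction \(\phi|_W\). Since \(\phi\) is \(E\)-linear with kernel \(C\), its restriction to \(W\) is an \(F\)-linear map \(W\to E\) with kernel exactly \(W\cap C\), hence it induces an isomorphism of \(F\)-vector spaces \(W/(W\cap C)\cong\phi(W)\). In particular \(\phi(W)\) is an \(F\)-subspace of \(E\) with \(\dim_{F}\phi(W)=\dim_{F}(W/W\cap C)\). The hypothesis \(1\in\phi(W)\) then gives \(F=F\cdot1\subseteq\phi(W)\), and since \(\fieldext{W}{C}\) is by definition the subfield generated by \(\phi(W)\), we obtain the chain \(F\subseteq\phi(W)\subseteq\fieldext{W}{C}\). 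This already settles the first assertion that \(\fieldext{W}{C}\) contains \(F\).

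For independence of the choice of \(\phi\), suppose \(\phi'\) is another admissible map. Both \(\phi\) and \(\phi'\) are \(E\)-linear with the same codimension-one kernel \(C\), so each factors through the one-dimensional quotient \(V/C\); comparing the induced maps forces \(\phi'=\lambda\phi\) for some \(\lambda\in E\), and \(1\in\phi'(W)\) shows \(\lambda\neq0\). Then \(\phi'(W)=\phi(W)\lambda\), and since \(1\) lies in both \(\phi(W)\) and \(\phi'(W)\), \cref{lem:cosetgeneratesamefield} (applied with \(X=\phi'(W)\), \(Y=\phi(W)\), \(u=\lambda\)) shows that the two sets generate the same subfield of \(E\). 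Hence \(\fieldext{W}{C}\) is well defined.

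The remaining statements compare \(\phi(W)\) with the field it generates. The inequality \(\ind{\fieldext{W}{C}}{F}\ge\dim_{F}(W/W\cap C)\) is immediate from the inclusion \(\phi(W)\subseteq\fieldext{W}{C}\) of \(F\)-subspaces together with \(\dim_{F}\phi(W)=\dim_{F}(W/W\cap C)\). For the characterization of equality when \(d\coloneqq\dim_{F}(W/W\cap C)\) is finite, observe that an inclusion of \(F\)-subspaces of equal finite dimension is forced to be an equality; thus equality of indices holds if and only if \(\phi(W)=\fieldext{W}{C}\), and this in turn holds if and only if \(\phi(W)\) is already a field, because \(\fieldext{W}{C}\) is by definition the smallest subfield of \(E\) containing \(\phi(W)\). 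Finally, the statement \(\fieldext{W}{C}=F\iff\dim_{F}(W/W\cap C)=1\) is exactly the case \(d=1\): then \(\phi(W)\) is the one-dimensional space \(F\cdot1=F\), a field, so \(\fieldext{W}{C}=\phi(W)=F\); conversely \(\fieldext{W}{C}=F\) squeezes \(\phi(W)=F\) via \(F\subseteq\phi(W)\subseteq\fieldext{W}{C}\).

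I anticipate no real obstacle: every step rests on the isomorphism \(W/(W\cap C)\cong\phi(W)\) and elementary \(F\)-linear algebra, the only external input being \cref{lem:cosetgeneratesamefield} for the independence claim. The one point to handle with care is the finiteness hypothesis, which enters solely to upgrade ``subspace of equal dimension'' to ``equal subspace'' in the characterization of equality.
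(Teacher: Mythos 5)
Your proof is correct and takes essentially the same route as the paper's: independence via the scalar relation \(\phi'=\lambda\phi\) and \cref{lem:cosetgeneratesamefield}, the containment \(F\subseteq\phi(W)\subseteq\fieldext{W}{C}\), the identification \(\dim_{F}\phi(W)=\dim_{F}(W/W\cap C)\), and the finite-dimension comparison forcing \(\phi(W)=\fieldext{W}{C}\) in the equality case. Your write-up is merely more explicit (spelling out the isomorphism \(W/(W\cap C)\cong\phi(W)\) and treating the \(\dim_{F}(W/W\cap C)=1\) equivalence directly from the chain rather than as a corollary of the equality case), but the argument is the same.
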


\begin{proof}
    If \(\psi\) is another map with the required properties, then there 
    exists a non-zero \(u\) in \(E\) such that \(\psi=u\phi\): by 
    \cref{lem:cosetgeneratesamefield}, \(\psi(W)\) and \(\phi(W)\) generate 
    the same subfield. Since \(\phi(W)\) is an \(F\)-subspace of \(E\) 
    containing \(1\), it contains \(F\) and so does \(\fieldext{W}{C}\). 
    Moreover, \(\ind{\fieldext{W}{C}}{F}\ge\dim_{F}\phi(W)=\dim_{F}(W/W\cap 
    C)\) and, under the additional requirement that \(\dim_{F}(W/W\cap C)\) 
    is finite, equality holds if and only if \(\phi(W)=\fieldext{W}{C}\), 
    that is \(\phi(W)\) is a field; in particular, \(\fieldext{W}{C}=F\) if 
    and only if \(\dim_{F}(W/W\cap C)=1\). 
\end{proof}

\section{Finitely generated \(F\)-subalgebras of \(M\)}\label{sec:main}
In the rest of this paper \(E\supseteq F\) will be fields and \(M\) will be a 
Lie \(E\)-algebra of maximal class. We are interested in the description of 
the structure of  the Lie \(F\)-algebra generated by an \(F\)-subspace 
\(L_{1}\) of \(M_{1}\). First note that if \(\dim_{E}EL_{1}\le 1\) then the 
elements of \(L_{1}\) commute  and the Lie \(F\)-algebra generated by 
\(L_{1}\) is \(L_{1}\) itself equipped with the trivial Lie product. Thus we 
will assume that \(EL_{1}=M_{1}\): in particular,  \(L_{1}\nsubseteq 
C_{i}=\cent{M_1}{M_i}\) for every \(i\ge2\). We associate to \(L_{1}\) a 
sequence of subfields \(\{F_{i}\}_{i\ge2}\) of \(E\) by setting 
\(F_{i}\coloneqq \fieldext{L_{1}}{C_{i}}\) as in \cref{def:linearalgebra}.  
If \(K\) is a subfield of \(E\) containing \(F\) and \(T_{1}\) denotes the 
\(K\)-subspace \(KL_{1}\), then \(ET_{1}=M_{1}\), so we may similarly 
associate to \(T_{1}\) a sequence of subfields \(K_{i}\coloneqq 
\fieldext{T_{1}}{C_{i}}\). It easily turns out that \(K_{i}=K(F_{i})\) for 
every \(i\ge2\). 
\begin{remark}
When \(\dim_{F}L_{1}=2\), a sequence of integers \(\{d_{i}\}_{i\ge2}\) 
associated to \(L_{1}\) is defined in \cite{ACG+2023} by setting 
\(d_{i}\coloneqq \dim_{F}(L_{1}\cap C_{i})\): since  \(L_{1}\nsubseteq 
C_{i}\), the possible values for \(d_{i}\) are just \(0\) and \(1\) and 
\(d_{i}=1\) if and only if \(F_{i}=F\) so the sequence of the \(F_{i}\)'s 
(possibly) carries more information than the sequence of the \(d_{i}\)'s. 
\end{remark}
The following result, whose proof is immediate, will be used repeatedly 
hereafter. 
\begin{lemma}\label{lem:adjoint}
  If \(x\) is an element of 
  \(M_{1}\setminus C_{i}\) for some \(i\ge 2\), then the adjoint map 
  \(l\mapsto [l,x]\) is an \(E\)-isomorphism between \(M_{i}\) and 
  \(M_{i+1}\).
\end{lemma}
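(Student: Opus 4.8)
The plan is to exploit that in a Lie algebra of maximal class both homogeneous components in question are one-dimensional over \(E\), so that the adjoint map becomes an isomorphism as soon as it is shown to be nonzero. First I would observe that the map \(\operatorname{ad}_{x}\colon l\mapsto[l,x]\) is \(E\)-linear: the Lie bracket is \(E\)-bilinear and \(x\in M_{1}\), so scalar multiples and sums in \(M_{i}\) are respected. Since \(M\) is generated in degree~\(1\), we have \(M_{i+1}=[M_{i},M_{1}]\), and in particular \([l,x]\in M_{i+1}\) for every \(l\in M_{i}\); thus \(\operatorname{ad}_{x}\) is a well-defined \(E\)-linear map from \(M_{i}\) to \(M_{i+1}\).

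Next I would record that, by the maximal class hypothesis, \(\dim_{E}M_{i}=\dim_{E}M_{i+1}=1\) for \(i\ge2\). A nonzero \(E\)-linear map between two \(E\)-vector spaces of dimension~\(1\) is automatically bijective, so it suffices to prove that \(\operatorname{ad}_{x}\) does not vanish identically. To this end I would pick any nonzero \(l_{i}\in M_{i}\) and use the remark recorded in the preliminaries that \(C_{i}=\cent{M_{1}}{M_{i}}=\cent{M_{1}}{l_{i}}\). The assumption \(x\in M_{1}\setminus C_{i}\) then says precisely that \(x\) does not centralise \(l_{i}\), that is \([l_{i},x]\ne0\). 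Hence \(\operatorname{ad}_{x}(l_{i})\ne0\), so \(\operatorname{ad}_{x}\) is nonzero and therefore an \(E\)-isomorphism between \(M_{i}\) and \(M_{i+1}\).

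There is essentially no obstacle here: the entire content of the argument is the translation of the hypothesis \(x\notin C_{i}\) into the nonvanishing of \(\operatorname{ad}_{x}\) on \(M_{i}\), after which one-dimensionality finishes the proof. The only point requiring a modicum of care is ensuring that the chosen \(l_{i}\) is nonzero so that the identification \(C_{i}=\cent{M_{1}}{l_{i}}\) is legitimate, which is unproblematic since \(M_{i}\ne0\). This is why the statement can fairly be described as having an immediate proof.
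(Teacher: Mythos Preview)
Your proof is correct and is exactly the intended argument: the paper does not even spell it out, merely stating that the proof is immediate, and your expansion (linearity of \(\operatorname{ad}_{x}\), one-dimensionality of \(M_{i}\) and \(M_{i+1}\), and nonvanishing via \(x\notin C_{i}=\cent{M_{1}}{l_{i}}\)) is precisely that immediate argument.
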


\begin{lemma}\label{lem:precovering}
  Let  \(U\) be an additive
  subgroup of \(M_{i}\) for some \(i\ge2\) and \(x\) an element in
  \(M_{1}\setminus C_{i}\). Then \(\cond{U}{E}=\cond{[U,x]}{E}\).
\end{lemma}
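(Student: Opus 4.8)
The plan is to deduce both inclusions from the two preceding results. The inclusion \(\cond{U}{E}\subseteq\cond{[U,x]}{E}\) is immediate from \cref{rem:increasingring}, applied with \(X=\{x\}\), so the whole point is the reverse inclusion, for which the hypothesis \(x\notin C_{i}\) becomes essential.

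The key observation is that \([U,x]\) is nothing but the image of \(U\) under the adjoint map \(\phi\colon l\mapsto[l,x]\). Since \(U\) is an additive subgroup and \(\phi\) is additive, the set \(\{[u,x]:u\in U\}\) is already closed under addition, so \([U,x]=\phi(U)\) with no need to pass to the generated subgroup. Because \(x\in M_{1}\setminus C_{i}\), \cref{lem:adjoint} guarantees that \(\phi\) restricts to an \(E\)-linear isomorphism \(M_{i}\to M_{i+1}\); in particular \(\phi\) is \(E\)-linear and injective on \(M_{i}\).

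With this in hand the reverse inclusion is a formal consequence of \(E\)-linearity. Taking \(\alpha\in\cond{[U,x]}{E}\), that is \(\alpha\phi(U)\subseteq\phi(U)\), and using \(\alpha\phi(U)=\phi(\alpha U)\), we obtain \(\phi(\alpha U)\subseteq\phi(U)\); injectivity of \(\phi\) then yields \(\alpha U\subseteq U\), i.e.\ \(\alpha\in\cond{U}{E}\). Combining the two inclusions gives the claimed equality.

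There is no genuine obstacle here: the only points requiring care are the identification \([U,x]=\phi(U)\) as additive subgroups and the invocation of \cref{lem:adjoint} to know that \(\phi\) is invertible on the relevant component, which is exactly what the exclusion of \(C_{i}\) provides. The same argument in fact shows that any \(E\)-linear isomorphism between \(E\)-vector spaces carries the invariant ring \(\cond{\cdot}{E}\) of an additive subgroup to that of its image.
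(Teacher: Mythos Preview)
Your proof is correct and follows essentially the same approach as the paper: both obtain one inclusion from \cref{rem:increasingring}, observe that \([U,x]=\{[u,x]:u\in U\}\) because \(U\) is an additive group, and then use the \(E\)-linearity and injectivity of the adjoint map from \cref{lem:adjoint} to pull the scalar \(\alpha\) back through the bracket. The paper phrases the last step pointwise (\([\alpha u,x]=\alpha[u,x]=[u',x]\) forces \(\alpha u=u'\)), while you phrase it globally (\(\phi(\alpha U)\subseteq\phi(U)\) forces \(\alpha U\subseteq U\)); this is the same argument.
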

\begin{proof}
  By \cref{rem:increasingring}, the inclusion \(\cond{U}{E}\subseteq 
  \cond{[U,x]}{E}\) holds. To prove the reverse inclusion, we must show that 
  \(\alpha u\in U\) for every \(\alpha\in \cond{[U,x]}{E}\) and \(u\in U\). 
  Since \(U\) is an additive group, we have \([U,x]=\{[u,x]\mid u\in U\}\): 
  thus \([\alpha u,x]=\alpha[u,x]=[u',x]\) for some \(u'\in U\). By 
  \cref{lem:adjoint}, \(\alpha u=u'\in U\). 
\end{proof}
\begin{lemma}\label{lem:covering}
  Let \(U\) be a finite-dimensional \(F\)-subspace of \(M_{i}\) for some 
  \(i\ge 2\). Then \(\dim_{F}[U,L_{1}]\ge\dim_{F}U\) and equality holds if 
  and only if \(\cond{U}{E}\) contains \(F_i\), in which case 
  \(\cond{U}{E}=\cond{[U,L_{1}]}{E}\). 
\end{lemma}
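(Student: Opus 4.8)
The plan is to reduce the computation of \([U,L_{1}]\) to a single adjoint action and then exploit the field-theoretic machinery of \cref{def:linearalgebra} together with \cref{lem:precovering}. Since \(EL_{1}=M_{1}\) and \(i\ge2\), we have \(L_{1}\nsubseteq C_{i}\); let \(\phi\colon M_{1}\to E\) be an \(E\)-linear map with \(\ker\phi=C_{i}\) and \(1\in\phi(L_{1})\), as in \cref{def:linearalgebra}, and set \(P\coloneqq\phi(L_{1})\), so that \(P\) is an \(F\)-subspace of \(E\) containing \(1\) and \(F_{i}\) is the subfield of \(E\) generated by \(P\). First I would fix \(x_{0}\in L_{1}\) with \(\phi(x_{0})=1\) and observe that, writing any \(x\in L_{1}\) as \(x=\phi(x)x_{0}+c\) with \(c\in C_{i}\), the relation \([u,x]=\phi(x)[u,x_{0}]\) holds for every \(u\in M_{i}\), because \([u,c]=0\). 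Consequently \(W'\coloneqq[U,x_{0}]\) is the \(F\)-subspace \(\{[u,x_{0}]\mid u\in U\}\) of \(M_{i+1}\), an \(F\)-isomorphic copy of \(U\) by \cref{lem:adjoint}, and \([U,L_{1}]\) is exactly the additive subgroup \(PW'\) of \(M_{i+1}\) generated by the products \(pw\) with \(p\in P\) and \(w\in W'\).

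From here the inequality is immediate: since \(1\in P\) we have \(W'\subseteq PW'=[U,L_{1}]\), whence \(\dim_{F}[U,L_{1}]\ge\dim_{F}W'=\dim_{F}U\), the last equality because the adjoint map by \(x_{0}\) is an \(E\)-, hence \(F\)-, isomorphism.

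For the equality, I would argue that \(\dim_{F}[U,L_{1}]=\dim_{F}U\) forces \(PW'\) to be finite-dimensional and to coincide with its subspace \(W'\); and \(PW'=W'\) holds precisely when \(pW'\subseteq W'\) for all \(p\in P\), that is, when \(P\subseteq\cond{W'}{E}\). By \cref{lem:precovering} applied to \(x_{0}\notin C_{i}\) we have \(\cond{W'}{E}=\cond{[U,x_{0}]}{E}=\cond{U}{E}\), so the condition becomes \(P\subseteq\cond{U}{E}\). Since \(F\subseteq\cond{U}{E}\) and \(\dim_{F}U\) is finite, \cref{lem:modulering} guarantees that \(\cond{U}{E}\) is a \emph{field}; therefore \(P\subseteq\cond{U}{E}\) if and only if the subfield \(F_{i}\) generated by \(P\) is contained in \(\cond{U}{E}\), which is exactly the asserted criterion.

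Finally, in the equality case \(PW'=W'\) gives \([U,L_{1}]=W'=[U,x_{0}]\), and a second application of \cref{lem:precovering} yields \(\cond{[U,L_{1}]}{E}=\cond{[U,x_{0}]}{E}=\cond{U}{E}\), completing the proof. The only delicate point is the identity \([U,L_{1}]=PW'\): one must check that summing the scaled images \(\phi(x)[U,x_{0}]\) over all \(x\in L_{1}\) produces precisely the additive group generated by the products of \(P\) and \(W'\), which rests on \(W'\) already being an \(F\)-subspace so that no extra additive generation is lost. Everything else is bookkeeping around the field property of \(\cond{U}{E}\).
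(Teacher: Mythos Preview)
Your argument is correct and follows essentially the same route as the paper: fix \(x_{0}\in L_{1}\setminus C_{i}\) with \(\phi(x_{0})=1\), observe that \([u,x]=\phi(x)[u,x_{0}]\) for every \(x\in L_{1}\), reduce the equality question to \(\phi(L_{1})\subseteq\cond{[U,x_{0}]}{E}\), and then invoke \cref{lem:precovering} and \cref{lem:modulering}. The only cosmetic difference is that you package things via the explicit identity \([U,L_{1}]=PW'\), whereas the paper phrases the same content as ``equality holds iff \([U,y]\subseteq[U,x]\) for every \(y\in L_{1}\)''.
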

\begin{proof}
  Take \(x\in L_{1}\setminus C_{i}\): by \cref{lem:adjoint}, 
  \(\dim_{F}[U,L_{1}]\ge\dim_{F}[U,x]=\dim_{F}U\). Equality holds if and only 
  \([U,x]=[U,L_{1}]\), that is \([U,y]\subseteq[U,x]\) for every \(y\in 
  L_{1}\). Let \(\phi\colon M_{1}\to E\) be the \(E\)-linear map such that 
  \(\ker\phi=C_{i}\) and \(\phi(x)=1\). Then \(y-\phi(y)x\in C_{i}\), so that 
  \([U,y]=\phi(y)[U,x]\). Thus, \([U,y]\subseteq[U,x]\) for every \(y\in 
  L_{1}\) if and only if \(\phi(L_{1})\subseteq \cond{[U,x]}{E}\). By 
  \cref{lem:precovering}, \(\cond{[U,x]}{E}=\cond{U}{E}\), and this is a 
  field by \cref{lem:modulering}. Thus \(\phi(L_{1})\subseteq 
  \cond{[U,x]}{E}\) if and only if \(\cond{U}{E}\supseteq  F_{i}\). Finally, 
  if equality holds, then \([U,x]=[U,L_{1}]\) and so 
  \(\cond{[U,L_{1}]}{E}=\cond{[U,x]}{E}=\cond{U}{E}\). 
\end{proof}

\begin{proposition}\label{prop:nearlymaximal}
   Let \(K\) be a subfield of \(E\) containing 
  \(F_{i}\) for every \(i\ge2\). If \(T\coloneqq\bigoplus_{i\ge 1}
  T_{i}\) is the \(K\)-algebra generated by \(L_{1}\) then 
  \(\dim_{K}T_{i}=\dim_{K}T_{1}-1\) for every \(i\ge 2\). 
\end{proposition}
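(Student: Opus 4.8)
The plan is to prove that the sequence \(\dim_{K}T_{i}\) is constant for \(i\ge 2\) and is one less than \(\dim_{K}T_{1}\), by analysing how \(T_{i+1}=[T_{i},T_{1}]\) is produced from \(T_{i}\). The starting observation is the remark preceding the statement: the fields attached to \(T_{1}=KL_{1}\) satisfy \(\fieldext{T_{1}}{C_{i}}=K(F_{i})\), and since \(K\supseteq F_{i}\) this equals \(K\). Reading \cref{lem:linearalgebra} with base field \(K\) then yields \(\dim_{K}\bigl(T_{1}/(T_{1}\cap C_{i})\bigr)=1\) for every \(i\ge 2\); equivalently, for any \(x\in T_{1}\setminus C_{i}\) there is a \(K\)-space decomposition \(T_{1}=Kx\oplus(T_{1}\cap C_{i})\). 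Such an \(x\) exists because \(T_{1}\supseteq L_{1}\nsubseteq C_{i}\). This codimension-one statement is the only place where the hypothesis \(K\supseteq F_{i}\) is used, and it drives the whole argument.

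The degree-two component is treated separately, as it is where the dimension drops. Fixing \(x\in T_{1}\setminus C_{2}\) and writing \(W\coloneqq T_{1}\cap C_{2}\), I would expand \(T_{2}=[T_{1},T_{1}]=[Kx\oplus W,Kx\oplus W]\) by bilinearity. The term \([Kx,Kx]\) vanishes since the bracket is alternating, and \([W,W]=0\) because \(W\subseteq C_{2}\) and \(C_{2}\) is a one-dimensional \(E\)-subspace, so any two of its elements are \(E\)-proportional and commute. Hence \(T_{2}=[W,x]\). The \(K\)-linear map \(W\to M_{2}\), \(w\mapsto[w,x]\), is injective: if \([w,x]=0\) with \(0\ne w\in W\), then \(x\in\cent{M_{1}}{w}=Ew\subseteq C_{2}\), contradicting \(x\notin C_{2}\). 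Therefore \(\dim_{K}T_{2}=\dim_{K}W=\dim_{K}T_{1}-1\).

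For the inductive step with \(i\ge 2\), I would fix \(x\in T_{1}\setminus C_{i}\) and decompose \(T_{1}=Kx\oplus(T_{1}\cap C_{i})\) as above. Since \(T_{i}\subseteq M_{i}\) and \(T_{1}\cap C_{i}\subseteq C_{i}=\cent{M_{1}}{M_{i}}\), one has \([T_{i},T_{1}\cap C_{i}]\subseteq[M_{i},C_{i}]=0\), so that \(T_{i+1}=[T_{i},T_{1}]=[T_{i},x]\). By \cref{lem:adjoint} the adjoint map \(l\mapsto[l,x]\) is an \(E\)-isomorphism \(M_{i}\to M_{i+1}\); its restriction to \(T_{i}\) is an injective \(K\)-linear map onto \(T_{i+1}\), whence \(\dim_{K}T_{i+1}=\dim_{K}T_{i}\). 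Combining this with the base case gives \(\dim_{K}T_{i}=\dim_{K}T_{2}=\dim_{K}T_{1}-1\) for all \(i\ge 2\), as required.

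The conceptual heart, and the step I expect to be most delicate, is the asymmetry between degree \(2\) and the higher degrees: one must pinpoint exactly why a single dimension is lost passing from \(T_{1}\) to \(T_{2}\) and then never again. In my approach this is localised in the two distinct vanishing phenomena, namely \([W,W]=0\) (valid only because \(C_{2}\) is an \(E\)-line) versus \([M_{i},C_{i}]=0\) (the defining property of the \(2\)-step centraliser), together with the fact that the adjoint map is merely injective on \(W\) in degree \(2\) but is a full isomorphism from \(M_{i}\) for \(i\ge 2\). I would also note that phrasing everything through the explicit \(K\)-space decompositions keeps the conclusion meaningful, and the argument valid, even when \(\dim_{K}T_{1}\) is infinite, so that no finiteness hypothesis is needed; the inductive step is in essence the content of \cref{lem:covering} read over \(K\), but arguing directly via \cref{lem:adjoint} sidesteps its finite-dimensionality assumption.
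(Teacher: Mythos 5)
Your proof is correct and takes essentially the same route as the paper's: both use \(K_i=K(F_i)=K\) together with \cref{lem:linearalgebra} to obtain the codimension-one decomposition \(T_1=Kx+(T_1\cap C_i)\), then reduce \(T_{i+1}\) to \([T_i,x]\) for \(i\ge2\) (and \(T_2\) to \([T_1\cap C_2,x]\), using that elements of \(C_2\) commute), and conclude via injectivity of the adjoint map. Your extra touches—the explicit injectivity argument on \(W=T_1\cap C_2\) in degree \(2\) (which the paper covers by a slightly loose citation of \cref{lem:adjoint}) and the remark that no finite-dimensionality is needed—are minor elaborations, not a different approach.
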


\begin{proof}
  Let \(i\ge2\). Since \(EL_{1}=M_{1}\), it follows that \(L_{1}\nsubseteq 
  C_{i}\) and, a fortiori, \(T_{1}\nsubseteq C_{i}\). We choose \(x_{i}\in 
  L_{1}\setminus C_{i}\) and the \(E\)-linear map \(\phi_{i}\colon M_{1}\to 
  E\) such that \(\ker\phi_{i}=C_{i}\) and \(\phi_{i}(x_{i})=1\): thus 
  \(F_{i}\) is generated by \(\phi_{i}(L_{1})\) and 
  \(K_{i}\) is generated by \(\phi_{i}(T_{1})\). Since 
  \(T_{1}=KL_{1}\), we have 
  \(K_{i}=K(F_{i})=K\): 
  \cref{lem:linearalgebra} implies that \(\dim_{K}(T_{1}/T_{1}\cap C_{i})=1\) 
  so that \(T_{1}=Kx_{i}+(T_{1}\cap C_{i})\). Thus 
  \(T_{i+1}=[T_{i},T_{1}]=[T_{i},x_{i}]\) and 
  \(T_{2}=[T_{1},T_{1}]=[T_{1}\cap C_{2},x_{2}]\) (note that the elements in 
  \(C_{2}\) commute each other). By \cref{lem:adjoint}, 
  \(\dim_{K}T_{i+1}=\dim_{K}T_{i}\) for \(i\ge 2\) and 
  \(\dim_{K}T_{2}=\dim_{K}(T_{1}\cap C_{2})\), therefore 
  \(\dim_{K}T_{2}=\dim_{K}T_{1}-\dim_{K}(T_{1}/T_{1}\cap 
  C_{2})=\dim_{K}T_{1}-1\). 
\end{proof}

\begin{definition}
    The \emph{\(2\)-step field} of the Lie algebra \(L\) is the field \( \K\)  generated by \(\{F_i\}_{i\ge 2} \).  
\end{definition}

\begin{proposition}\label{prop:expanding}
  Suppose that  \(t\coloneqq\ind{\K}{F}\) is finite. Let \(X_{0}\) be a finite-dimensional \(F\)-subspace of \(M_{i}\) for some \(i\ge2\). Define recursively 
  \(X_{j}\coloneqq[X_{j-1},L_{1}]\) for \(j\ge1\). Then 
  \(\dim_{\K}\K X_{j}=\dim_{\K}\K X_{0}\) for every \(j\ge0\) and there exists an integer \(l\), independent of \(X_0\) and \(i\), such that 
  \(X_{l+k}\) is a \(\K\)-vector space for every \(k\ge0\). 
\end{proposition}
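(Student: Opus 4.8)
The plan is to reduce the recursion to a purely multiplicative statement inside \(E\) by trivialising the one-dimensional components \(M_{i+j}\). Using \cref{lem:adjoint}, I would fix a nonzero \(e_0\in M_i\) and, for each \(j\ge0\), a vector \(x_{(j)}\in L_1\setminus C_{i+j}\) together with the functional \(\phi_{i+j}\colon M_1\to E\) with \(\ker\phi_{i+j}=C_{i+j}\) and \(\phi_{i+j}(x_{(j)})=1\); setting \(e_{j+1}\coloneqq[e_j,x_{(j)}]\) gives an \(E\)-basis of each \(M_{i+j}\). Since \([m,y]=\phi_{i+j}(y)[m,x_{(j)}]\) for all \(m\in M_{i+j}\), \(y\in M_1\) (the component of \(y\) in \(C_{i+j}\) acting trivially on \(M_{i+j}\)), writing elements of \(M_{i+j}\) in the coordinate \(e_j\) identifies \(X_j\) with an \(F\)-subspace \(\hat X_j\subseteq E\) and turns the recursion into \(\hat X_{j+1}=\hat X_j\Phi_j\), where \(\Phi_j\coloneqq\phi_{i+j}(L_1)\) is a finite-dimensional \(F\)-subspace of \(E\) with \(1\in\Phi_j\) and \(F(\Phi_j)=F_{i+j}\subseteq\K\) (by \cref{def:linearalgebra} and \cref{lem:cosetgeneratesamefield}), the product being the additive span of products. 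Throughout, ``being a \(\K\)-vector space'' is preserved by the \(E\)-linear coordinate isomorphism.

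For the first assertion, note that \(\K\Phi_j=\K\) since \(1\in\Phi_j\subseteq\K\); hence \(\K\hat X_{j+1}=\K\hat X_j\Phi_j=\K\hat X_j\), so \(\dim_{\K}\K X_j=\dim_{\K}\K\hat X_j\) is independent of \(j\). (Equivalently, this is \cref{lem:covering} applied over the base field \(\K\), for which the relevant field \(\fieldext{\K L_1}{C_{i+j}}\) equals \(\K\), so that equality always holds.)

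For the second assertion, iterating the recursion gives \(\hat X_j=\hat X_0 V_j\) with \(V_j\coloneqq\Phi_0\Phi_1\cdots\Phi_{j-1}\) (and \(V_0\coloneqq F\)); these are \emph{increasing} \(F\)-subspaces of \(\K\), because each \(\Phi_k\ni1\), so \(\dim_F V_j\) is nondecreasing and bounded by \(t=\ind{\K}{F}\). The crux is to show \(V_N=\K\) for some \(N\) depending only on \(M\) and \(L_1\): once this holds, \(\hat X_j=\hat X_0\K\) is a \(\K\)-space for every \(j\ge N\). I would choose finitely many indices with \(\K=F(F_{m_1},\dots,F_{m_s})\) (possible as \(t\) is finite), and let \(\tau\) be the maximum over \(u\) of the least integer \(t_u\) with \(C_{t_u}=C_{m_u}\); by \cref{lem:specialisation} every interval of length \(\tau\) contains, for each \(u\), a degree where \(C_{m_u}\) recurs. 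If \(V_a=V_{a+1}=\cdots=V_{a+\tau}\), then \(V_a\Phi_k=V_a\) for the \(\tau\) consecutive \(k\) in \([a,a+\tau)\), so \(V_a\) absorbs multiplication by each \(\Phi_k\) whose degree carries some \(C_{m_u}\); since \(1\in V_a\) this forces \(V_a\supseteq F_{m_u}\) for every \(u\), whence \(V_a\supseteq\K\) and thus \(V_a=\K\). Therefore, as long as \(V_j\ne\K\), the dimension \(\dim_F V_j\) strictly increases at least once in every \(\tau\) steps; since it can increase at most \(t-1\) times, \(V_j=\K\) for all \(j\ge l\) with \(l\coloneqq(t-1)\tau\).

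The main obstacle is precisely the requirement that \(l\) be independent of \(X_0\) — in particular of \(d\coloneqq\dim_{\K}\K X_0\). A naive bookkeeping through \cref{lem:covering} bounds the number of steps by the \(F\)-codimension \(td-\dim_F X_0\) of \(X_0\) in \(\K X_0\), which grows with \(d\). The reduction above circumvents this: the factor \(V_j\) responsible for \(\K\)-closure does not involve \(X_0\) at all, so its stabilisation time \(l\) is intrinsic to the sequence of centralisers. Independence of \(i\) is then exactly the uniform recurrence of centralisers guaranteed by \cref{lem:specialisation}, and the finiteness of \(l\) rests on reducing the (possibly infinitely many) fields \(F_m\) generating \(\K\) to finitely many via \(\ind{\K}{F}=t<\infty\).
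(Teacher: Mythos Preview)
Your argument is correct and takes a genuinely different route from the paper's. The paper first reduces to the case \(\dim_{F}X_{0}=1\) (handling the general case by decomposing \(X_{0}\) as a sum of lines), and then argues by contradiction: if \(\cond{X_{l}}{E}\nsupseteq\K\) then \(\cond{X_{j}}{E}\) omits some fixed \(F_{u}\) for all \(j\le l\), and \cref{lem:covering} forces at least \(t-1\) strict jumps in \(\dim_{F}X_{j}\) along the \(t-1\) recurrences of \(C_{u}\) guaranteed by \cref{lem:specialisation}, pushing \(\dim_{F}X_{l}\) up to \(t=\dim_{F}\K X_{l}\).

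Your coordinate trivialisation instead factors the recursion as \(\hat X_{j}=\hat X_{0}V_{j}\), isolating the part \(V_{j}=\Phi_{0}\cdots\Phi_{j-1}\subseteq\K\) that is entirely independent of \(X_{0}\); the whole problem then becomes showing that this increasing chain of \(F\)-subspaces of \(\K\) reaches \(\K\) in a number of steps depending only on \(t\) and the recurrence pattern of the centralisers. This makes the independence of \(l\) from \(X_{0}\) structurally transparent, with no need for the reduction to one-dimensional \(X_{0}\), and yields the same bound \(l=(t-1)\tau\) (your \(\tau\) playing the role of the paper's \(r\)). The one step you pass over quickly---``\(V_{a}\supseteq F_{m_{u}}\) for every \(u\), whence \(V_{a}\supseteq\K\)''---is justified because \(V_{a}\) is closed under multiplication by each \(F_{m_{u}}\) (not merely contains it), so it is a module over the ring they generate, which equals \(\K\) since the extension is algebraic; this is implicit in your ``absorbs multiplication'' phrasing but worth making explicit. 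Both proofs ultimately rest on the same two inputs, \cref{lem:specialisation} and the finiteness of \(\ind{\K}{F}\); the paper stays closer to the language of \(\cond{U}{E}\) and \cref{lem:covering}, while your approach trades those for an explicit multiplicative model inside \(E\).
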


\begin{proof}
  Since \(\K X_{j}=[\K X_{j-1},L_{1}]\) for every \(j\ge1\), \cref{lem:covering} yields the first claim.

  In order to prove the second claim, we first consider the case \(\dim_{F}X_{0}=1\), so that \(\dim_{F}\K X_{j}=\dim_{F}\K X_{0}=\ind{\K}{F}=t\) for every \(j\ge0\).
  
  Since \(t\) is finite, there exists an integer \(r\) such that \(\K\) is generated by the \(F_{u}\)'s with \(u\le r\). Let \(l\coloneqq (t-1)r\). We need to prove that \(\cond{X_{l+k}}{E}\supseteq \K\) for every \(k\ge0\). We proceed by contradiction assuming, in virtue of \cref{rem:increasingring}, that \(\cond{X_{j}}{E}\nsupseteq F_{u}\) for some \(u\le r\) and every \(0\le j\le l\). By \cref{lem:specialisation}, we may 
  choose \(t-1\) indices \(0\le c_{1}<c_{2}<\dots<c_{t-1}<l\) such that 
  \(F_{i+c_{1}}=\dots=F_{i+c_{t-1}}=F_{u}\). Thus, \cref{lem:covering} 
  yields 
  \begin{multline*}
  1=\dim_{F}X_{0}\le\dim_{F}X_{c_{1}}<\dim_{F}X_{c_{1}+1} \le\dim_{F}X_{c_{2}}<\dim_{F}X_{c_{2}+1}\le\dots\le\\
  \dim_{F}X_{c_{t-1}}<\dim_{F}X_{c_{t-1}+1}\le\dim_{F}X_{l}.
  \end{multline*}
  Hence \(\dim_{F}X_{l}\ge t=\dim_{F}\K X_{l}\) and therefore 
  \(X_{l}=\K X_{l}\), a 
  contradiction. 

When \(\dim_{F}X_{0}>1\), decomposing \(X_{0}\) as sum of one-dimensional 
subspaces  completes the proof. 
\end{proof}

We are now in position to state a dichotomy for the \(F\)-subalgebras of a 
Lie \(E\)-algebra of maximal class, showing that they are either ideally 
\(r\)-constrained for some \(r\) or they are not just infinite. 

\begin{theorem}\label{thm:dichotomy}
  Let \(E\supseteq F\) be fields, \(M\) a Lie
  \(E\)-algebra of maximal class and \(L=\bigoplus_{i\ge1} L_{i}\) the Lie \(F\)-algebra generated by a finite-dimensional
 \(F\)-subspace \(L_{1}\) of \(M_{1}\)
  such that \(EL_{1}=M_{1}\). 
  Assume that the \(2\)-step field \(\K\) of \(L\) is a finite extension of \(F\). One of the following holds
  \begin{enumerate}
    \item  \(\dim_{\K}\K L_{1}=2\) and \(L\) is ideally \(r\)-constrained 
        for some \(r\); 
    \item  \(\dim_{\K}\K L_{1}>2\) and \(L\) is not just infinite. 
  \end{enumerate}
\end{theorem}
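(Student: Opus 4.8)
The plan is to set \(T\coloneqq\K L=\bigoplus_{i\ge 1}\K L_{i}\), the \(\K\)-subalgebra of \(M\) generated by \(L_{1}\), and to read off its homogeneous dimensions from \cref{prop:nearlymaximal}. Since \(\K\) contains every \(F_{i}\) by definition of the \(2\)-step field, that proposition gives \(\dim_{\K}\K L_{i}=\dim_{\K}\K L_{1}-1\) for all \(i\ge 2\); write \(d\coloneqq\dim_{\K}\K L_{1}\), so that the two cases of the statement are precisely \(d=2\) and \(d>2\). Throughout I will use the characterisation of ideal \(r\)-constrainedness recalled in \cref{sec:pre}: it suffices to produce an \(r\) with \([z,\prescript{}{r}L_{1}]=L_{i+r}\) for every \(i\ge 1\) and every non-zero homogeneous \(z\) of degree \(i\). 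I will also use that the graded ideal of \(L\) generated by such a \(z\) has degree-\((i+j)\) component exactly \([z,\prescript{}{j}L_{1}]\) (a consequence of the Jacobi identity and the generation of \(L\) in degree \(1\)), and that \([z,L_{1}]\ne 0\) whenever \(0\ne z\in M_{1}\), since \(EL_{1}=M_{1}\) forces the alternating bracket \(M_{1}\times M_{1}\to M_{2}\) to be non-degenerate on \(L_{1}\); in particular \(L_{2}=[L_{1},L_{1}]\ne 0\).

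For case (1), \(d=2\) means \(\dim_{\K}\K L_{i}=1\) for \(i\ge 2\), so \(T\) is a Lie algebra of maximal class over \(\K\). Fix \(z\) of degree \(i\ge 2\) and apply \cref{prop:expanding} to \(X_{0}\coloneqq Fz\): writing \(X_{j}=[z,\prescript{}{j}L_{1}]\), the proposition furnishes an integer \(l\), independent of \(z\) and of \(i\), such that \(X_{l}\) is a \(\K\)-subspace with \(\dim_{\K}X_{l}=\dim_{\K}\K X_{0}=1\). Since \(X_{l}\subseteq L_{i+l}\subseteq\K L_{i+l}\) and this last space is also one-dimensional over \(\K\), the three coincide, giving \([z,\prescript{}{l}L_{1}]=L_{i+l}\); bracketing once more yields \([z,\prescript{}{l+1}L_{1}]=L_{i+l+1}\). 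The same identity for a degree-\(1\) element \(z\) follows by expanding \([z,\prescript{}{l+1}L_{1}]=[[z,L_{1}],\prescript{}{l}L_{1}]\) and applying the degree-\(2\) case to each non-zero \([z,y]\in L_{2}\). Hence \(r\coloneqq l+1\) works simultaneously for all degrees, and \(L\) is ideally \(r\)-constrained.

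For case (2), \(d>2\) gives \(\dim_{\K}\K L_{i}=d-1\ge 2\) for \(i\ge 2\). Choose any non-zero homogeneous \(z\), say of degree \(i\) (such \(z\) exists since \(L_{2}\ne 0\)), and let \(I\) be the graded ideal of \(L\) it generates, so that \(I_{i+j}=[z,\prescript{}{j}L_{1}]=X_{j}\) with \(X_{0}=Fz\). By \cref{prop:expanding}, \(\dim_{\K}\K X_{j}=1\) for all \(j\), whereas \(\dim_{\K}\K L_{i+j}=d-1\ge 2\); thus \(\K I_{i+j}=\K X_{j}\) is a proper \(\K\)-subspace of \(\K L_{i+j}\). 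Were \(L_{i+j}\subseteq\K I_{i+j}\) we would get \(\K L_{i+j}\subseteq\K I_{i+j}\), a contradiction, so \(I_{i+j}\subsetneq L_{i+j}\) for every \(j\ge 0\). Each of these infinitely many degrees contributes at least \(1\) to the \(F\)-codimension of \(I\) in \(L\), so \(I\) is a non-zero graded ideal of infinite codimension and \(L\) is not just infinite.

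The two structural facts quoted at the outset—the shape of the ideal generated by a homogeneous element and the non-degeneracy of the bracket on \(L_{1}\)—are routine. I expect the genuine content to sit in case (1), in extracting a single \(r\) valid for all degrees at once, which relies entirely on the \emph{uniformity} of the integer \(l\) in \cref{prop:expanding}; and in case (2), in confirming that the strict \(\K\)-linear inclusion \(\K I_{i+j}\subsetneq\K L_{i+j}\) really descends to a strict \(F\)-linear inclusion \(I_{i+j}\subsetneq L_{i+j}\) in each of infinitely many degrees.
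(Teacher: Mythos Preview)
Your proof is correct, and in case~(1) it is essentially the paper's argument recast via the element-wise characterisation of ideal \(r\)-constrainedness: both apply \cref{prop:expanding} to the first non-zero component (you take \(X_{0}=Fz\), the paper takes \(X_{0}=I_{i}\)) and use the uniformity of \(l\) together with \(\dim_{\K}T_{i+l}=1\) to force \(X_{l}=L_{i+l}\), handling degree~\(1\) by passing to degree~\(2\) first.

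In case~(2) you take a genuinely shorter route. The paper first invokes the second conclusion of \cref{prop:expanding} to identify \(L_{j}=T_{j}\) for large \(j\), then picks a proper \(\K\)-subspace \(X\) of such an \(L_{j}\) and shows, via \cref{lem:covering} applied inside \(T\), that the \(T\)-ideal \(J\) it generates has \(\dim_{\K}(J\cap T_{k})=\dim_{\K}X\) for all \(k\ge j\); the identification \(L_{k}=T_{k}\) is then needed to descend to \(L\). You bypass both the identification \(L_{j}=T_{j}\) and the appeal to \cref{lem:covering}: the single element \(z\) in degree \(i\ge2\) gives \(\dim_{\K}\K I_{i+j}=1<d-1=\dim_{\K}\K L_{i+j}\) directly from the first conclusion of \cref{prop:expanding}, and the strictness descends to \(F\) by the trivial contrapositive you note. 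This is cleaner; the paper's detour through \(L_{j}=T_{j}\) buys the additional structural information that \(L\) and \(T\) eventually coincide degree-wise, which is used later in the corollary but is not needed for the dichotomy itself.
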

\begin{proof}
  Let \(T=\bigoplus_{i\ge1} T_{i}\) be the \(\K\)-algebra generated by \(L_{1}\), so
  \(T_{1}=\K L_{1}\). By \cref{prop:nearlymaximal},
  \(\dim_{\K}T_{i}=\dim_{\K}T_{1}-1\) for every \(i\ge 2\).

  Assume first that \(\dim_{\K}T_{1}=2\). Let \(I\) be a non-zero graded ideal
  of \(L\): set \(I_{j}\coloneqq I\cap L_{j}\) for every \(j\ge 1\). Let
  \(i\) be the smallest integer such that \(I_{i}\ne\{0\}\). Suppose that
  \(i\ge 2\): by \cref{prop:expanding}, there exists \(l\) independent of
  \(i\) such that \(I_{i+l}\) contains a non-zero \(\K\)-subspace of
  \(T_{i+l}\). Since \(\dim_{\K}T_{i+l}=1\) this implies that
  \(I_{i+l}=T_{i+l}\) and, a fortiori, \(I_{i+l}=L_{i+l}\). If \(i=1\), then
  \(I_{2}\ne\{0\}\) and the same argument yields \(I_{2+l}=L_{2+l}\). In
  any case \(L^{i}\supseteq I\supseteq L^{i+l+1}\), that is \(L\) is ideally
  \((l+1)\)-constrained.
  
  Assume now that \(\dim_{\K}T_{1}>2\) so that \(\dim_{\K}T_{i}>1\) for every 
  \(i\ge 2\). By \cref{prop:expanding}, \(L_{i}\) is a \(\K\)-vector space for 
  \(i\) large enough. Since \(\K L_{i}=T_{i}\), this means that \(L_{i}=T_{i}\) 
  for \(i\) large enough: in particular, \(\dim_{F}L_{i}\ge\dim_{\K}T_{i}>1\). 
  Choose such an \(i\) and let \(X\) be a non-zero proper \(\K\)-subspace 
  of \(L_{i}\). Denote by \(I\) and \(J\) the ideals generated by \(X\) 
  respectively in \(L\) and \(T\): clearly \(I\subseteq J\). 
  \cref{lem:covering} implies that \(\dim_{\K}(T_{j}\cap J)=\dim_{\K}X\) for 
  every \(j\ge i\) so that \(T_{j}\cap J\) is a proper subspace of \(T_{j}\): 
  as an obvious consequence \(L_{j}\cap I\) is a proper subspace of \(L_{j}\) 
  (remind that \(L_{j}=T_{j}\)): therefore \(I\) has infinite codimension in 
  \(L\), which is  not just infinite. 
\end{proof}

\begin{remark}
  There are assumptions that imply the finiteness of \(\ind{\K}{F}\). The simplest one is that \(\ind{E}{F}\) is finite. Another possibility is that \(E\) is an algebraic extension of \(F\) and \(M\) has finitely many distinct \(2\)-step centralisers.
\end{remark}

\begin{corollary}
  In the same hypotheses of \cref{thm:dichotomy}, if \(L\) is
  ideally constrained then \(\dim_{F}L_{i}=\ind{\K}{F}\) for every \(i\ge3\) and
  \(F_{i}=\K\) for every \(i\ge 2\).
\end{corollary}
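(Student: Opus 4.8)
The plan is to work inside the \(\K\)-algebra \(T=\bigoplus_{i\ge1} T_i\) generated by \(L_1\), exactly as in the proof of \cref{thm:dichotomy}. Since a finitely generated ideally constrained algebra is just infinite, the dichotomy forces case~(1), so \(\dim_\K T_1=2\) and \(\dim_\K T_i=1\) for \(i\ge2\); in particular \(\dim_F T_i=t\) for \(i\ge2\), where \(t:=\ind{\K}{F}\). I would then fix, for each \(i\ge2\), a generator \(l_i\) of \(M_i\), an element \(x_i\in L_1\setminus C_i\) and the functional \(\phi_i\) of \cref{def:linearalgebra} (so that \(P_i:=\phi_i(L_1)\) generates \(F_i\) and \(1\in P_i\)), and identify \(L_i\) with the additive subgroup \(\tilde L_i\subseteq E\) of its coefficients with respect to \(l_i\). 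Writing \([l_i,x_i]=\gamma_i l_{i+1}\) with \(\gamma_i\ne0\) (by \cref{lem:adjoint}) and reducing each \(y\in L_1\) modulo \(C_i\) gives \(\tilde L_{i+1}=\gamma_i\,\tilde L_i P_i\) for all \(i\ge2\). The ideal constraint (\(r=1\)) says a single element already expands fully, i.e. \([z,L_1]=L_{i+1}\) for every nonzero homogeneous \(z\); translated, \(a\gamma_iP_i=\tilde L_{i+1}\) for every nonzero \(a\in\tilde L_i\), so \(\tilde L_{i+1}=c_iP_i\) is a scalar multiple of \(P_i\) and \(\dim_F L_{i+1}=\dim_F P_i\).

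The core of the argument is to feed this relation into itself. Comparing \(\tilde L_{i+2}=c_{i+1}P_{i+1}\) with \(\tilde L_{i+2}=\gamma_{i+1}\tilde L_{i+1}P_{i+1}=\gamma_{i+1}c_iP_iP_{i+1}\) gives \(P_{i+1}=\lambda P_iP_{i+1}\) for a nonzero \(\lambda\); since \(1\in P_i\) forces \(P_{i+1}\subseteq P_iP_{i+1}\), comparing (finite) \(F\)-dimensions yields \(P_iP_{i+1}=P_{i+1}\). Hence \(P_i\subseteq\cond{P_{i+1}}{E}\subseteq F_{i+1}\) (the second inclusion because \(1\in P_{i+1}\)), and as \(\cond{P_{i+1}}{E}\) is a field containing \(P_i\) it contains \(F_i\); therefore \(F_i\subseteq F_{i+1}\) for all \(i\ge2\). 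Thus the \(F_i\) form an increasing chain of intermediate fields of the finite extension \(\K/F\), so their degrees stabilise and \(F_i=F_N=\K\) for all large \(i\). To reach every \(i\ge2\) I would invoke \cref{lem:specialisation}: since \(F_i\) depends only on \(C_i\) and each value of \(C_i\) recurs arbitrarily far out, every \(F_i\) equals some \(F_j\) with \(j\ge N\), giving \(F_i=\K\); this is the second assertion. For the dimension count, \(F_i=\K\) together with \(P_i\subseteq\cond{P_{i+1}}{E}\subseteq\K\) and the fact that \(P_i\) generates \(\K\) forces \(\cond{P_{i+1}}{E}=\K\); hence \(P_{i+1}\) is a nonzero \(\K\)-subspace of \(\K\), so \(P_{i+1}=\K\), for every \(i\ge2\). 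Therefore \(\dim_F L_{i+1}=\dim_F P_i=t\) for \(i\ge3\), i.e. \(\dim_F L_i=t\) for all \(i\ge4\).

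The main obstacle is the remaining case \(i=3\), i.e. proving \(\dim_F L_3=\dim_F P_2=t\): the inductive mechanism above controls \(\cond{P_{i+1}}{E}\) only for \(i\ge2\) and says nothing about \(P_2\), which a priori could be a proper \(F\)-subspace of \(\K\) that merely generates \(\K\). To close this gap I would use the structural fact that the first two two-step centralisers of a maximal class algebra coincide, \(C_3=C_2\): if \(y\) spans \(C_2\) and \(l_2\) spans \(M_2\) then \([y,l_2]=0\), and for \(l_3=[l_2,x]\) with \(x\notin C_2\) the Jacobi identity gives \([y,l_3]=[[y,l_2],x]+[l_2,[y,x]]=0\), because \([y,x]\in M_2\) and \([l_2,M_2]=0\); thus \(y\in C_3\) and \(C_3=C_2\). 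Consequently \(\phi_3\) and \(\phi_2\) have the same kernel, so \(\phi_3=\mu\phi_2\) and \(P_3=\mu P_2\) is a nonzero scalar multiple of \(P_2\). Since scaling does not change the conductor, \(\cond{P_2}{E}=\cond{P_3}{E}=\K\), so \(P_2\) is likewise a nonzero \(\K\)-subspace of \(\K\), i.e. \(P_2=\K\) and \(\dim_F L_3=t\). Assembling the two parts then gives \(\dim_F L_i=t\) for all \(i\ge3\) and \(F_i=\K\) for all \(i\ge2\).
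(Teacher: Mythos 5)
Your argument is correct, but it takes a genuinely different route from the paper's. The paper works with dimensions only: the constraint gives \(\dim_F L_{i+1}=\dim_F(L_1/L_1\cap C_i)\); \cref{lem:specialisation} applied to \(C_2\) gives \(\dim_F L_{i+1}=\dim_F L_3\) infinitely often; \cref{lem:covering} gives weak monotonicity of \(\{\dim_F L_i\}\), hence constancy from degree \(3\); the proof of \cref{thm:dichotomy} identifies the constant as \(\ind{\K}{F}\); and the inequality \(\ind{\K}{F}\ge\ind{F_i}{F}\ge\dim_F(L_1/L_1\cap C_i)=\dim_F L_{i+1}\) from \cref{lem:linearalgebra} then yields \(F_i=\K\). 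You instead work with the coefficient sets \(P_i\) themselves: feeding the relation \(\tilde L_{i+1}=c_iP_i\) into itself gives \(P_iP_{i+1}=P_{i+1}\), hence \(F_i\subseteq\cond{P_{i+1}}{E}\subseteq F_{i+1}\), so the fields \(F_i\) form an increasing chain; this chain stabilises at \(\K\), is propagated back to every \(i\ge2\) by \cref{lem:specialisation}, and the conductor computation \(\cond{P_{i+1}}{E}=\K\) forces \(P_{i+1}=\K\), settling all degrees except \(3\), which you patch with the structural fact \(C_3=C_2\) (correctly proved via the Jacobi identity). Your route establishes something the paper's proof does not state: ideal constraint forces monotonicity of the fields \(F_i\), not merely of the dimensions \(\dim_F L_i\) — a stronger intermediate fact that could be of independent use. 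On the other hand it is longer, needs explicit coordinates, and needs the extra fact \(C_3=C_2\), which incidentally you could have avoided by the same specialisation trick you already use: choosing \(j\ge 3\) with \(C_j=C_2\) gives \(\phi_j=\mu\phi_2\), hence \(P_2=\mu^{-1}P_j=\mu^{-1}\K\), and \(1\in P_2\subseteq F_2=\K\) forces \(P_2=\K\). The paper's route gets degree \(3\) for free from the recurrence of \(C_2\) and reuses its established lemmas wholesale, which is why it is so much shorter.
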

\begin{proof}
  Let \(z\) be a (non-zero) homogeneous element of \(L\) of degree \(i\) with 
  \(i\ge2\): since \([z,L_{1}]=L_{i+1}\) we have 
  \(\dim_{F}L_{i+1}=\dim_{F}(L_{1}/L_{1}\cap C_{i})\). By 
  \cref{lem:specialisation}, 
  there are infinitely many values of \(i\) such that 
  \(\dim_{F}L_{i+1}=\dim_{F}L_{3}\). By \cref{lem:covering}, 
  \(\{\dim_{F}L_{i}\}_{i\ge 2}\) is a weakly increasing sequence, so it is 
  constant for \(i\ge 3\). By the proof of \cref{thm:dichotomy}, \(L_{i}\) is 
  a \(\K\)-vector space of dimension \(1\) for \(i\) large enough: thus 
  \(\dim_{F}L_{i}=\ind{\K}{F}\) for \(i\ge3\). By \cref{lem:linearalgebra}, 
  \(\ind{\K}{F}\ge\ind{F_{i}}{F}\ge\dim_{F}(L_{1}/L_{1}\cap 
  C_{i})=\dim_{F}L_{i+1}\) for \(i\ge2\), whence the last claim. 
\end{proof}
\begin{remark}
  If \(\dim_{F}L_{1}=2\) then, since
  \(\dim_{F}L_{1}\ge\dim_{\K}\K L_{1}\ge\dim_{E}EL_{1}=2\), the algebra
  generated by \(L_{1}\) is ideally \(r\)-constrained. However this can
  happen even if \(\dim_{F}L_{1}>2\): see \cref{ex:rconstrained}.
\end{remark}

\section{Examples and open problems}\label{sec:examples}

We now show that both cases of \cref{thm:dichotomy} actually occur. We remind 
that all the \(2\)-step centralisers of the metabelian Lie algebra of maximal 
class coincide. 

\begin{example}\label{ex:notjustinfinite}
  Let \(E\supsetneq F\) be fields with \(\ind{E}{F}\) finite and let \(M\) be 
  the metabelian Lie \(E\)-algebra of maximal class. Let \(L_{1}\) be the 
  \(F\)-subspace of \(M_{1}\) generated by an element \(x\) in 
  \(M_{1}\setminus C_{2}\) and an \(F\)-subspace \(U\) of \(C_{2}\) of 
  dimension \(d\ge 2\). Take the \(E\)-linear map \(\phi\colon M_{1}\to E\)
  such that \(\phi(x)=1\) and \(\ker\phi=C_{2}\); thus 
  \(\K=F_{2}=F\) and \(\dim_{\K}\K L_{1}=\dim_{F}L_{1}\ge 3\) 
  so that the Lie \(F\)-algebra generated by \(L_{1}\) is not just infinite. 
\end{example}

\begin{example}\label{ex:rconstrained}
  Let \(\alpha\) be an algebraic element of degree \(d\ge 2\) over a field \(F\) 
  and let \(E\coloneqq F(\alpha)\). Let \(M\) be a Lie algebra of maximal 
  class over \(E\) with at most two distinct \(2\)-step centralisers. Denote 
  by \(C\) a \(2\)-step centraliser and choose a non-zero element \(y\) in 
  \(C\). Choose another non-zero element \(x\) of \(M_{1}\) as follows: if 
  \(M\) has two distinct \(2\)-step centralisers, \(x\) belongs to the 
  \(2\)-step centraliser different from \(C\); otherwise \(x\) is just an 
  element not in \(C\). Let \(L_{1}\) be the \(F\)-subspace of \(M_{1}\) 
  generated by \(x\), \(\alpha x\) and \(y\). Given an index \(i\) such that 
  \(C_{i}=C\), consider the \(E\)-linear map \(\phi\colon M_{1}\to E\) such 
  that \(\phi(x)=1\) and \(\ker\phi=C\); thus 
  \(F_{i}=F(\alpha)=E=\K\) and \(\dim_{\K}\K L_{1}=2\) so that 
  the Lie \(F\)-algebra \(L\) generated by \(L_{1}\) is ideally 
  \(r\)-constrained for some \(r\).

  We now compute \(r\): some notation is needed. 
  Given a non-zero homogeneous element \(z\) of \(L\) of degree \(i\), we 
  denote by \(\covdeg{z}\) the smallest \(k\) such that 
  \([z,\prescript{}{k}L_{1}]=L_{i+k}\). If \(r\) is the maximum of 
  \(\covdeg{z}\) as \(z\) ranges over the set of homogeneous elements of 
  \(L\), then \(L\) is ideally \(r\)-constrained but not ideally 
  \((r-1)\)-constrained. 
  
  Given a non-zero homogeneous element \(z\) of degree \(i\ge 2\) of \(M\) and 
  a nonnegative integer \(t\), we denote by \(U(z,t)\) the set of the 
  elements of the form \(p(\alpha)z\) as \(p(\alpha)\) ranges over the set of 
  the polynomials in \(\alpha\) with coefficients in \(F\) and degree at most 
  \(t\). The set \(U(z,t)\) is clearly an \(F\)-subspace of \(M_{i}\) and its 
  dimension is  \(t+1\) for \(t<d\) and \(d\) for \(t\ge d-1\). If 
  \(C_{i}=C\) then \([U(z,t),L_{1}]=U([z,x],t+1)\); this is the only 
  possibility if \(M\) is metabelian. If \(M\) is not metabelian there are 
  indices \(i\) such that \(C_{i}\ne C\): for those indices, we have 
  \([U(z,t),L_{1}]=U([z,y],t)\). In particular, 
  \(\dim_{F}U(z,t)<\dim_{F}[U(z,t),L_{1}]\) if and only if \(C_{i}=C\) and 
  \(t<d-1\).

  If \(w\) is a non-zero homogeneous element of \(M\) of the same degree 
  \(i\ge 2\) as \(z\) and \(t\) and \(s\) are nonnegative integers such that 
  \(U(z,t)\) is strictly contained in \(U(w,s)\) but 
  \([U(z,t),L_{1}]=[U(w,s),L_{1}]\), then \(C_{i}=C\) and \(t=d-2\). 
  
  Given an integer \(i\ge 2\) and a nonnegative integer \(k\), we denote by 
  \(m_{i,k}\) the number of integers \(j\) such that \(i\le j<i+k\) and 
  \(C_{j}=C\); we denote by \(m_{i}\) the smallest \(k\) such that 
  \(m_{i,k}=d-1\). Since \(m_{i,k}\le k\), we have that \(m_{i}\ge d-1\), and 
  equality holds for every \(i\) when \(M\) is metabelian. An easy induction 
  shows that \([U(z,t),\prescript{}{k}L_{1}]=U(w,t+m_{i,k})\) for some 
  homogeneous element \(w\) of degree \(i+k\). In particular, since 
  \(L_{2}=U([y,x],1)\), we have that for every \(j\ge2\), 
  \(L_{j}=U(w,1+m_{2,j-2})\) for some homogeneous element \(w\) of degree 
  \(j\). 
  
  If \(z\) is an element of \(L\) of degree \(i\ge2\), then the 
  \(F\)-subspace generated by \(z\), that is \(U(z,0)\), is strictly 
  contained in \(L_{i}=U(w,t)\) for some \(w\) of degree \(i\) and some 
  positive integer \(t\). By the previous discussion, 
  \([U(z,0),\prescript{}{k}L_{1}]=[L_{i},\prescript{}{k}L_{1}]\) if and only 
  if \(k\ge m_{i}\), so that \(\covdeg{z}=m_{i}\). It remains to consider 
  non-zero elements in \(L_{1}\): if \(z\) is such an element then 
  \([z,L_{1}]\) is a non-zero subspace of \(L_{2}\) and therefore 
  \(\covdeg{z}\le m_{2}+1\); if we choose \(z\coloneqq x\) then \([x,L_{1}]\) 
  is the \(F\)-vector space generated by \([x,y]\) and therefore 
  \(\covdeg{z}=m_{2}+1\). Summarising, the maximum of \(\covdeg{z}\), that is 
  the integer \(r\) such that \(L\) is ideally \(r\)-constrained but not 
  \((r-1)\)-constrained, is \(m_{2}+1\) if \(m_{2}=\max\{m_{i}\}\) and is 
  \(\max\{m_{i}\}\) otherwise. In particular, if \(M\) is the metabelian Lie 
  algebra of  maximal class, then \(r=d\) thus showing that there exist 
  examples of ideally \(r\)-constrained Lie algebras for every positive 
  integer \(r\).  
 \end{example}  

We close the paper with some problems left open to the reader. 

\begin{problem}
    In \cref{thm:dichotomy} we have assumed that \(\K\supseteq F\) is a finite 
    extension. What can be said when the degree of the extension is infinite?  
    For instance, let \(E=F(\alpha)\), where \(\alpha\) is transcendental 
    and \(M\) be the metabelian Lie \(E\)-algebra of maximal class.  Consider  
    \(L\) to be the  \(F\)-algebra generated by \(x\) and \(\alpha x+y\), 
    where \(x\in M_1\setminus C_2\) and \(0\ne y\in C_2\). Then \(E=\K\) and an argument 
    similar to \cref{ex:rconstrained} shows that \(L\) is the free 
    \(2\)-generated metabelian algebra. Note also that \(L\) coincides with the algebra 
    constructed in~\cite[Lemma~1]{GMY2001}.
\end{problem}
\begin{problem}
Is it possible to compute the smallest \(r\) in the first case of 
\cref{thm:dichotomy}? 
\end{problem}
\begin{problem}
  What can be said if we take subalgebras of a thin algebra or even more
  generally of an ideally \(r\)-constrained algebra over \(E\)? 
\end{problem}

\printbibliography
\end{document}